\begin{document}
\title{Fault-Tolerant Metric Dimension of $P(n,2)$ with Prism Graph \thanks{This research
is partially supported by The University of Lahor, Pakpattan Campus, Pakistan}
\author{Z. Ahmad$^1$, M. O. Ahmad$^2$, A. Q. Baig$^3$, M. Naeem$^3$}}
\institute{$^1$Department of Mathematics, The University of Lahore, Pakpattan Campus, Pakistan\\
$^2$Department of Mathematics and Statistics, The University of Lahore, Lahore, Pakistan\\
$^3$Department of Mathematics, The University of Lahore, Pakpattan Campus, Pakistan\\
E-mail: zubair786ppn@gmail.com, drchadury@yahoo.com, aqbaig1@gmail.com, naeem@gmail.com}
\vspace{0.4cm}
\authorrunning{Z. Ahmad, M. O. Ahmad, A. Q. Baig, M. Naeem}

\titlerunning{Fault-Tolerant Metric Dimension of $P(n,2)$ with Prism Graph}
\maketitle
\begin{center}2010 \textbf{Mathematics Subject Classification}: 05C12
\end{center}
\begin{abstract}
Let G be a connected graph and $d(a,b)$ be the distance between the vertices $a$ and $b$. A subset $U =\{u_1,u_2,\cdots,u_k\}$ of the vertices is called a resolving set for $G$ if for every two distinct vertices $a,b \in V(G)$, there is a vertex $u_\xi \in U$ such that $d(a,u_\xi)\neq d(b,u_\xi)$. A resolving set containing a minimum number of vertices is called a metric basis for $G$ and the number of vertices in a metric basis is its metric dimension denoted by $dim(G)$. A resolving set $U$ for $G$ is fault-tolerant if $U \setminus \{u\}$ is also a resolving set, for each $u \in U$, and the fault-tolerant metric dimension of $G$ is the minimum cardinality of such a set. In this paper we introduce the study of the fault-tolerant metric dimension of $P(n,2)$ with prism graph.\\
\end{abstract}
%\par
Keywords: {\em Fault-tolerant, Metric dimension, Basis, Resolving set, Petersen graph, Prism Graph}

\section{Notation and preliminary results}
The length of path  between two vertices $a,b\in V(G)$ in a connected graph is the shortest distance $d(a,b)$ between them. Let $c$ be a vertex of $G$ and $U = \{u_1,u_2,\ldots,u_k\}$ be an ordered set of vertices of $G$. The representation $r(c|U)$ of $c$ with respect to $U$ is the $k$-tuple $(d(c,u_1), d(c,u_2),\ldots, d(c,u_k))$. If distinct vertices of $G$ have distinct representations with respect to $U$, then $U$ is called a resolving set. If the resolving set has minimum number of elements then the set is called a basis for $G$ and the cardinality of $U$ is called the metric dimension of $G$ and is denoted by $dim(G)$. For further study about basis and metric dimension see \cite{10,9,14,6,21,18,7,8}.\\
Generalized Petersen graph $P(n,m)$ is an undirected graph having size $3m$ and order $2n$. It is 3 connected graph which is 3 partite and has independence number 4.
If $U = \{u_1,u_2,\ldots,u_k\}$ is the ordered set of vertices of a graph $G$, then ${\xi}^{th}$ component of $r(c|U)$ is 0 $\Leftrightarrow$ $c= u_\xi$. Thus, in order to show that $U$ is a resolving set it suffices
to verify that $r(a|U)\neq r(b|U)$ for each pair of distinct vertices $a,b\in V(G)\backslash U$.\\
The following property is very useful in order to calculate the dimension of a graph $G$.\\

\begin{lemma} Let $U$ be a resolving set for a connected graph $G$ and $a,b \in
V(G)$. If $d(a,u)=d(b,u)$ for all vertices $u\in
V(G)\setminus\{a,b\}$, then $\{a,\beta\}\cap U\neq\emptyset$.
\end{lemma}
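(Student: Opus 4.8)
The plan is to argue by contraposition, exploiting the defining property of a resolving set together with the observation recorded just before the statement: the $\xi^{th}$ coordinate of a representation vanishes exactly when the vertex coincides with $u_\xi$. Assume, for contradiction, that $\{a,b\}\cap U=\emptyset$, so that neither $a$ nor $b$ belongs to the ordered set $U=\{u_1,\ldots,u_k\}$. Under this assumption every element of $U$ lies in the set $V(G)\setminus\{a,b\}$, which is precisely the range over which the hypothesis $d(a,u)=d(b,u)$ is quantified.

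From here the key step is to read off the two representations coordinatewise. For each $\xi\in\{1,\ldots,k\}$ we have $u_\xi\in V(G)\setminus\{a,b\}$, so the hypothesis gives $d(a,u_\xi)=d(b,u_\xi)$. Assembling these equalities across all $k$ coordinates yields
\[
r(a\mid U)=\bigl(d(a,u_1),\ldots,d(a,u_k)\bigr)=\bigl(d(b,u_1),\ldots,d(b,u_k)\bigr)=r(b\mid U).
\]
Since $a$ and $b$ are distinct vertices of $G$, the equality $r(a\mid U)=r(b\mid U)$ directly contradicts the assumption that $U$ is a resolving set, because a resolving set must assign distinct representations to distinct vertices. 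This contradiction forces $\{a,b\}\cap U\neq\emptyset$, which is the assertion (reading the typographical $\beta$ in the statement as $b$).

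I expect no serious obstacle here: the argument is a one-line contrapositive once the quantifier in the hypothesis is matched against the membership condition $a,b\notin U$. The only point demanding a moment of care is the implicit assumption that $a$ and $b$ are genuinely distinct; if one allowed $a=b$ the conclusion would be vacuous, so I would either state $a\neq b$ as part of the setup or note that the resolving property is only violated by a repeated representation among \emph{distinct} vertices. Beyond that bookkeeping, the proof requires nothing more than the definition of $r(\cdot\mid U)$ and of a resolving set.
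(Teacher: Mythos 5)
Your argument is correct. Note that the paper states this lemma without any proof at all (it is quoted as a standard, well-known property of resolving sets), so there is nothing to compare against; your contrapositive argument --- if $a,b\notin U$ then every $u_\xi$ lies in $V(G)\setminus\{a,b\}$, so $r(a\mid U)=r(b\mid U)$, contradicting resolvability --- is exactly the standard justification. You are also right on the two points of care: the symbol $\beta$ in the statement is a typo for $b$, and the hypothesis $a\neq b$ must be assumed (otherwise the statement is false, e.g.\ for any $a=b\notin U$), an omission in the paper's formulation that your remark correctly repairs.
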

Slater \cite{18} gave the idea of metric dimension in 1975. This concept was applied in chemistry in \cite{4} , this invariant was applied to the navigation of robots in networks in \cite{23}.
%In 2018, Zubair Ahmad \cite{z} calculated the metric dimensions of $P(n,2)\bigodot K_1$ graph. \\
%Let $\mathcal{G}$ represents the family of connected graphs $G_{\alpha}:
%\mathcal{G}=(G_{\alpha})_{\alpha\geq 1}$ depending on $\alpha$ as follows: the
%order $|V(G)|=\varphi(\alpha)$ and
%$\lim\limits_{\alpha\rightarrow\infty}\varphi(\alpha)=\infty$. If $\exists$
%a constant $K> 0$ such that $dim(G)\leq K$ for every $\alpha\geq 1$ then
% $\mathcal{G}$ has bounded metric dimension;
%otherwise unbounded. The families having bounded metric dimension are  \emph{prism}\cite{9}, \emph{generalized Petersen graphs}  $P(n,3)$ \cite{15}.\\
The dimension of a path graph in a connected graph $G$ is 1 but if $G$ is a cycle $C_{n}$ with $n\geq 3$ then dimension of $G$ is 2.
% The families of graphs which have constant metric dimension are Harary graphs $H_{4,n}$, antiprisms $A_{n}$ and generalized Petersen graphs $P(n,2)$.\\
\section{$P(n,2)$ with Prism Graph}

\begin{figure}[h]
\begin{center}
  % Requires \usepackage{graphicx}
  \includegraphics[width=6cm]{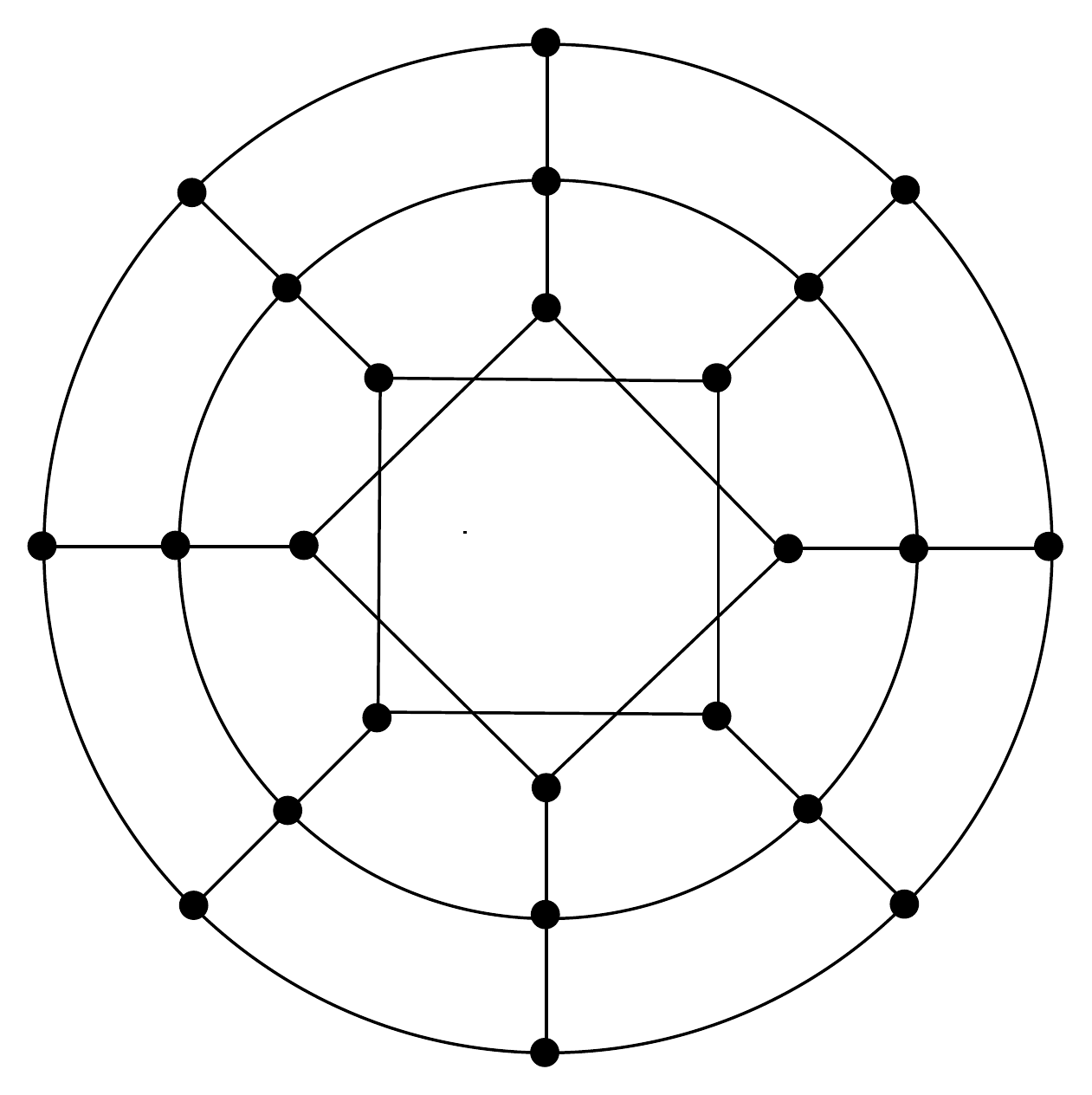}\\
  \caption{$P(8,2)$ with prism graph}\label{3}
  \end{center}
\end{figure}
Let us denote the inner vertices of $G$ by $a_1, a_2,\ldots,a_n$, central vertices by $b_1, b_2,\ldots,b_n$ and outer vertices by $c_1, c_2,\ldots,c_n$. We call the cycle induced by $\{a_{\xi} : 1 \leq \xi\leq n\}$, the inner cycle, the vertices $\{b_{\xi} : 1 \leq \xi\leq n\}$, the middle cycle and the vertices $\{c_{\xi} : 1 \leq \xi\leq n\}$ the outer cycle. We define $V(G)=\{a_{\xi},b_{\xi},c_{\xi} ; 1\leq \xi \leq n\}$ and $E(G)=\{a_{i}b_{\xi+2} ; 1\leq \xi\leq n  \pmod n \}$ $U$ $\{b_{\xi}b_{\xi+1} ; 1\leq \xi\leq n \pmod n \}$ $U$ $\{c_{\xi}c_{\xi+1} ; 1\leq \xi\leq n \pmod n \}$ $U$ $\{a_{\xi}b_{\xi} ; b_{\xi}c_{\xi}; 1\leq \xi\leq n \pmod n \}$

%%%%%%%%%%%%%%%%%%%%%%%%%%%%%
%%%%%%%%%%%%%%%%%%%%%%%%%%%%%

\begin{theorem} For every $n\geq5; n\equiv0,1,2,3(mod 4)$ we have
\begin{center}
  $G=3$.
\end{center}
%Let $P(n,2)\odot K_{1}$ be the modified generalized Petersen graph; then\\
%$dim(P(n,2)\odot K_{1})=3$ for every $n\geq 5$.
\end{theorem}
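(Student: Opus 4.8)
The statement asserts that the fault-tolerant metric dimension of $G$, which I denote $\dim_f(G)$, equals $3$ for all $n\geq 5$; since the four congruences $n\equiv 0,1,2,3\pmod 4$ exhaust the residues, this is a single claim for every $n\geq 5$, and the plan is to prove the matching lower and upper bounds separately.

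For the lower bound I would argue $\dim_f(G)\geq 3$ directly. The graph $G$ contains the middle and outer cycles, so it is not a path and $\dim(G)\geq 2$. If a two-element set $W=\{x,y\}$ were fault-tolerant, then both $W\setminus\{x\}=\{y\}$ and $W\setminus\{y\}=\{x\}$ would have to be resolving sets of $G$; but a single vertex resolves a connected graph only when that graph is a path. This contradiction forces every fault-tolerant resolving set to contain at least three vertices.

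The substance of the theorem is the upper bound, for which I would use the standard reformulation of fault-tolerance: an ordered set $W$ is a fault-tolerant resolving set if and only if every two distinct vertices of $G$ have representations $r(\cdot\,|\,W)$ differing in at least two coordinates. Indeed, deleting any single landmark removes one coordinate, so the remaining set still separates every pair exactly when each pair was separated in two or more coordinates to begin with. It therefore suffices to exhibit a single triple of landmarks chosen to exploit the rotational symmetry $\xi\mapsto\xi+1$ of $G$ — for instance three consecutive outer vertices $W=\{c_1,c_2,c_3\}$, or a mixed set such as $\{a_1,c_1,c_2\}$ — and to verify that no two vertices of $G$ have representations agreeing in two of the three entries. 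To run this verification I would first derive closed-form distance functions $d(c_1,\cdot)$, $d(c_2,\cdot)$, $d(c_3,\cdot)$ on each vertex class $a_\xi,b_\xi,c_\xi$; these are governed by the cyclic distance of $\xi$ from the landmark index, by the unit cost of crossing the spokes $a_\xi b_\xi$ and $b_\xi c_\xi$, and, most delicately, by the parity of $\xi$, since the inner edges $a_\xi a_{\xi+2}$ make the inner metric behave like that of a step-$2$ circulant. This parity effect is precisely what separates the residue classes: the inner step-$2$ adjacency closes into one cycle when $n$ is odd and into two cycles when $n$ is even, altering the inner-to-inner and inner-to-middle distances. Within each class I would tabulate the three-tuples $r(\cdot\,|\,W)$, grouped by vertex type and by $\xi$ taken modulo a small integer, and confirm the "differ in at least two coordinates" condition.

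The main obstacle I expect is exactly this distance bookkeeping on the inner cycle: the step-$2$ adjacency makes $d(\cdot,a_\xi)$ piecewise linear in $\xi$ with parity corrections, so the only candidate failures of fault-tolerance are pairs of vertices whose representations coincide in two of the three positions — geometrically, vertices placed symmetrically with respect to two of the landmarks. I would rule these out uniformly in $n$ by showing that such a pair would have to be simultaneously equidistant from two of the $c_i$, and then use the third coordinate together with the cycle geometry to force the two vertices to coincide, contradicting distinctness. Handling all four residue classes with one such argument, rather than re-running a separate computation for each small value of $n$, is where the proof needs the most care.
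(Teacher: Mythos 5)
Your proposal has a genuine gap: everything after the lower bound is a plan rather than a proof. The entire content of the paper's argument for this theorem is the explicit verification you defer --- it fixes, in each residue class of $n$ modulo $4$, a concrete three-element set such as $\{a_1,a_2,b_{2k+1}\}$ and tabulates the representation of every vertex of the inner, middle and outer cycles as an explicit function of $\xi$ and $k$, checking distinctness by inspection. You write that you ``would derive closed-form distance functions'' and ``would tabulate'' the tuples, and you correctly identify the parity complications coming from the step-$2$ inner edges, but you never produce the distance formulas or the case analysis, so no pair of vertices is ever actually shown to be separated. A claim that ``the only candidate failures are symmetric pairs, which I would rule out'' is not a proof valid for all $n$ in four residue classes.

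There is also a more basic problem with your reading of the statement. You interpret the theorem as asserting that the fault-tolerant metric dimension equals $3$, and your upper-bound strategy is to exhibit a three-element set whose representations differ pairwise in at least two coordinates. But any fault-tolerant resolving set $W$ satisfies $\dim(G)\le |W|-1$, so the fault-tolerant metric dimension is at least $\dim(G)+1$; since this graph is not a path and in fact (as the paper's own construction is meant to show) has metric dimension $3$, no three-element fault-tolerant resolving set can exist, and the verification you plan would necessarily fail. The paper's proof, despite the garbled statement ``$G=3$'', is actually about ordinary resolving sets: it claims three appropriately chosen vertices resolve $V(G)$ and proves nothing about fault tolerance and nothing about a lower bound. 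Your lower-bound paragraph (a two-element set cannot be fault-tolerant because a singleton resolves only a path) is correct and is something the paper omits entirely, but it is attached to a claim whose upper-bound half cannot be completed.
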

\begin{proof}We will show that only three vertices appropriately chosen suffice to
resolve all vertices. We have the following cases:\\
\textbf{Case 1}: $n\equiv0\textbf{(mod 4)}$\\
For $n = 4k$ with $k>1$ and $k\in\mathbb{Z}^+$. In this case, $\{a_{1}, a_{2}, b_{2k+1}\}$ resolves
$V(G)$. Indeed, $a_{1}$ and $a_{2}$ distinguish the vertices in the inner cycle, middle cycle and outer cycle. To show that $\{a_{1}, a_{2}, b_{2k+1}\}$ resolves vertices of $G$, first we give representations of the vertices in $G$ with respect to $W$.\\
Representations of the inner cycle are\\
$$r(a_{2\xi+1}|W)=\left\{
             \begin{array}{ll}
               (\xi,\xi+2,\xi-1,k+1-\xi), & \hbox{$2\leq \xi\leq k$;} \\
               (2k-\xi,2k+3-\xi,2k+1-\xi,\xi+1-k), & \hbox{$\xi = k+1$;} \\
               (2k-\xi,2k+3-\xi,2k+1-\xi,\xi+1-k), & \hbox{$k+2\leq \xi\leq 2k-1$.}
             \end{array}
           \right.
$$\\
$$r(a_{2\xi}|W)=\left\{
             \begin{array}{ll}
               (\xi+2,\xi-1,\xi+1,k+2-\xi), & \hbox{$2\leq \xi\leq k$;} \\
               (2k+3-\xi,2k+1-\xi,2k+3-\xi,\xi+1-k), & \hbox{$\xi = k+1$;} \\
               (2k+3-\xi,2k+1-\xi,2k+4-\xi,\xi+1-k), & \hbox{$k+2\leq \xi\leq 2k$.}
             \end{array}
           \right.
$$
Representations of the middle cycle are\\
$$r(b_{2\xi+1}|W)=\left\{
             \begin{array}{ll}
               (1,2,2,k+2), & \hbox{$\xi = 0$;} \\
               (\xi+1,\xi+1,\xi,k+2-\xi), & \hbox{$1\leq \xi\leq k-2$;} \\
               (\xi+1,\xi+1,\xi,2), & \hbox{$\xi = k-1$;} \\
               (\xi-1,\xi,\xi,2), & \hbox{$\xi = k+1$;} \\
               (2k+1-\xi,2k+2-\xi,2k+2-\xi,\xi+2-k), & \hbox{$k+2\leq \xi\leq 2k-1$.}
             \end{array}
           \right.
$$\\
$$r(b_{2\xi}|W)=\left\{
             \begin{array}{ll}
               (2,1,2,k+2), & \hbox{$\xi = 1$;} \\
               (\xi+1,\xi,\xi,k+3-\xi), & \hbox{$2\leq \xi\leq k-2$;} \\
               (\xi+1,\xi,\xi,3), & \hbox{$\xi = k-1$;} \\
               (\xi+1,\xi,\xi,1), & \hbox{$\xi = k$;} \\
               (\xi,\xi,\xi+1,1), & \hbox{$\xi = k+1$;} \\
               (\xi-2,\xi-2,\xi,3), & \hbox{$\xi = k+2$;} \\
               (2k+2-\xi,2k+2-\xi,2k+3-\xi,\xi+2-k), & \hbox{$k+3\leq \xi\leq 2k$.}
             \end{array}
           \right.
$$
Representations of the outer  cycle are\\
$$r(c_{2\xi+1}|W)=\left\{
             \begin{array}{ll}
               (2,3,3,k+3), & \hbox{$\xi = 0$;} \\
               (\xi+2,\xi+2,\xi+1,k+3-\xi), & \hbox{$1\leq \xi\leq k-2$;} \\
               (\xi+2,\xi+2,\xi+1,3), & \hbox{$\xi = k-1$;} \\
               (\xi+2,\xi+2,\xi+1,1), & \hbox{$\xi = k$;} \\
               (\xi,\xi+1,\xi+1,3), & \hbox{$\xi = k+1$;} \\
               (2k+2-\xi,2k+3-\xi,2k+3-\xi,\xi+3-k), & \hbox{$k+2\leq \xi\leq 2k-1$.}
             \end{array}
           \right.
$$\\
$$r(c_{2\xi}|W)=\left\{
             \begin{array}{ll}
               (3,2,3,k+4), & \hbox{$\xi = 1$;} \\
               (\xi+2,\xi+1,\xi+1,k+4-\xi), & \hbox{$2\leq \xi\leq k-2$;} \\
               (\xi+2,\xi+1,\xi+1,4), & \hbox{$\xi = k-1$;} \\
               (\xi+2,\xi+1,\xi+1,2), & \hbox{$\xi = k$;} \\
               (\xi+1,\xi+1,\xi+2,2), & \hbox{$\xi = k+1$;} \\
               (\xi-1,\xi-1,\xi,4), & \hbox{$\xi = k+2$;} \\
               (2k+3-\xi,2k+3-\xi,2k+4-\xi,\xi+3-k), & \hbox{$k+3\leq \xi\leq 2k$.}
             \end{array}
           \right.
$$
From the above discussion it follows that  $dim((G) = 4$ in this case.\\
%%%%%%%%%%%%%%%%%%%%%%%%%%%%%%%%%%%%%%%
\textbf{Case 2}: $n\equiv1\textbf{(mod 4)}$\\
For $n = 4k+1$ with $k\geq1$ and $k\in\mathbb{Z}^+$. When $n=5$ then $W=\{a_{1}, a_{2}, b_{3}\}$ is the resolving set for $V(G)$. In $V(G)$ the representations of the vertices are\\
$r(a_{4}|W)=(1,1,2,2)$, $r(a_{5}|W)=(2,1,1,2)$, $r(b_{1}|W)=(1,2,2,2)$, $r(b_{2}|W)=(2,1,2,1)$, $r(b_{4}|W)=(2,2,2,1)$, $r(b_{5}|W)=(2,2,2,2)$, $r(c_{1}|W)=(2,3,3,3)$, $r(c_{2}|W)=(3,2,3,2)$, $r(c_{3}|W)=(3,3,2,1)$, $r(c_{4}|W)=(3,3,3,2)$ and $r(c_{5}|W)=(3,3,3,3)$.\\
When $n=9$ then $W=\{a_{1}, a_{2}, b_{5}\}$ is the resolving set for $V(G)$. In $V(G)$ the representations of the vertices are\\
$r(a_{4}|W)=(3,1,3,2)$, $r(a_{5}|W)=(2,3,1,1)$, $r(a_{6}|W)=(2,2,3,2)$, $r(a_{7}|W)=(3,2,2,2)$, $r(a_{8}|W)=(1,3,2,3)$, $r(a_{9}|W)=(3,1,4,3)$, $r(b_{1}|W)=(1,2,2,4)$, $r(b_{2}|W)=(2,1,2,3)$, $r(b_{3}|W)=(2,2,1,2)$, $r(b_{4}|W)=(3,2,2,1)$, $r(b_{6}|W)=(3,3,3,1)$, $r(b_{7}|W)=(3,3,3,2)$, $r(b_{8}|W)=(2,3,3,3)$, $r(b_{9}|W)=(2,2,3,4)$,
$r(c_{1}|W)=(2,3,3,5)$, $r(c_{2}|W)=(3,2,3,4)$, $r(c_{3}|W)=(3,3,2,3)$, $r(c_{4}|W)=(4,3,3,2)$, $r(c_{5}|W)=(4,4,3,1)$, $r(c_{6}|W)=(4,4,4,2)$, $r(c_{7}|W)=(4,4,4,3)$, $r(c_{8}|W)=(3,4,4,4)$ and $r(c_{9}|W)=(3,3,4,5)$.\\
When $n\geq13$ then in this case, $\{a_{1}, a_{2}, b_{2k+1}\}$ resolves
$V(G)$. Indeed, $a_{1}$ and $a_{2}$ distinguish the vertices in the inner cycle, inner pendent cycle, outer pendent cycle and outer cycle. To show that $\{a_{1}, a_{2}, b_{2k+1}\}$ resolves vertices of $G$, first we give representations of the vertices in $G$ with respect to $W$.\\
Representations of the inner cycle are\\
$$r(a_{2\xi+1}|W)=\left\{
             \begin{array}{ll}
               (\xi,\xi+2,\xi-1,k+1-\xi), & \hbox{$2\leq \xi\leq k-1$;} \\
               (\xi,\xi+1,\xi-1,1), & \hbox{$\xi = k$;} \\
               (\xi,\xi-1,\xi-1,2), & \hbox{$\xi = k+1$;} \\
               (2k+3-\xi,2k+1-\xi,2k+3-\xi,\xi+1-k), & \hbox{$\xi = k+2$;} \\
               (2k+3-\xi,2k+1-\xi,2k+4-\xi,\xi+1-k), & \hbox{$k+3\leq \xi\leq 2k$.}
             \end{array}
           \right.
$$\\
$$r(a_{2\xi}|W)=\left\{
             \begin{array}{ll}
               (\xi+2,\xi-1,\xi+1,k+2-\xi), & \hbox{$2\leq \xi\leq k-1$;} \\
               (\xi+1,\xi-1,\xi+1,2), & \hbox{$\xi = k$;} \\
               (\xi-1,\xi-1,\xi,2), & \hbox{$\xi = k+1$;} \\
               (\xi-3,\xi-1,\xi-2,3), & \hbox{$\xi = k+2$;} \\
               (2k+1-\xi,2k+4-\xi,2k+2-\xi,\xi+1-k), & \hbox{$k+3\leq \xi\leq 2k$.}
             \end{array}
           \right.
$$
Representations of the outer pendent cycle are\\
$$r(b_{2\xi+1}|W)=\left\{
             \begin{array}{ll}
               (1,2,2,k+2), & \hbox{$\xi = 0$;} \\
               (\xi+1,\xi+1,\xi,2k-4-\xi), & \hbox{$1\leq \xi\leq k-2$;} \\
               (\xi+1,i+1,\xi,2k-2\xi), & \hbox{$k-1\leq \xi\leq k$;} \\
               (2k+2-\xi,2k+2-\xi,2k+2-\xi,2), & \hbox{$\xi = k+1$;} \\
               (2k+2-\xi,2k+2-\xi,2k+3-\xi,\xi+2-k), & \hbox{$k+2\leq \xi\leq 2k$.}
             \end{array}
           \right.
$$\\
$$r(b_{2\xi}|W)=\left\{
             \begin{array}{ll}
               (2,1,2,2k-4), & \hbox{$\xi = 1$;} \\
               (\xi+1,\xi,\xi,2k-3-\xi), & \hbox{$2\leq \xi\leq k-2$;} \\
               (\xi+1,\xi,\xi,2k+1-2\xi), & \hbox{$k-1\leq \xi\leq k$;} \\
               (\xi,\xi,\xi,1), & \hbox{$\xi = k+1$;} \\
               (\xi-2,\xi-1,\xi-1,3), & \hbox{$\xi = k+2$;} \\
               (2k+2-\xi,2k+3-\xi,2k+3-\xi,\xi+2-k), & \hbox{$k+3\leq \xi\leq 2k$.}
             \end{array}
           \right.
$$
Representations of the outer  cycle are\\
$$r(c_{2\xi+1}|W)=\left\{
             \begin{array}{ll}
               (2,3,3,k+3), & \hbox{$\xi = 0$;} \\
               (\xi+2,\xi+2,\xi+1,2k-3-\xi), & \hbox{$1\leq \xi\leq k-2$;} \\
               (\xi+2,\xi+2,\xi+1,3), & \hbox{$\xi=k-1$;} \\
               (\xi+2,\xi+2,\xi+1,1), & \hbox{$\xi = k$;} \\
               (2k+3-\xi,2k+3-\xi,2k+3-\xi,3), & \hbox{$\xi = k+1$;} \\
               (2k+3-\xi,2k+3-\xi,2k+4-\xi,\xi+3-k), & \hbox{$k+2\leq \xi\leq 2k$.}
             \end{array}
           \right.
$$\\
$$r(c_{2\xi}|W)=\left\{
             \begin{array}{ll}
               (3,2,3,2k-3), & \hbox{$\xi = 1$;} \\
               (\xi+2,\xi+1,\xi+1,2k-2-\xi), & \hbox{$2\leq \xi\leq k-2$;} \\
               (\xi+2,\xi+1,\xi+1,2k+2-2\xi), & \hbox{$k-1\leq \xi\leq k$;} \\
               (\xi+1,\xi+1,\xi+1,2), & \hbox{$\xi = k+1$;} \\
               (\xi-1,\xi,\xi,4), & \hbox{$\xi = k+2$;} \\
               (2k+3-\xi,2k+4-\xi,2k+4-\xi,\xi+3-k), & \hbox{$k+3\leq \xi\leq 2k$.}
             \end{array}
           \right.
$$
From the above discussion it follows that  $dim((G) = 4$ in this case.\\
%%%%%%%%%%%%%%%%%%%%%%%%%%%%%
\textbf{Case 3}: $n\equiv2\textbf{(mod 4)}$\\
For $n = 4k+2$ with $k\geq1$ and $k\in\mathbb{Z}^+$. In this case, $\{a_{1}, a_{2}, b_{2k+2}\}$ resolves
$V(G)$. Indeed, $a_{1}$ and $a_{2}$ distinguish the vertices in the inner cycle, inner pendent cycle, outer pendent cycle and outer cycle. To show that $\{a_{1}, a_{2}, b_{2k+2}\}$ resolves vertices of $G$, first we give representations of the vertices in $G$ with respect to $W$.\\
Representations of the inner cycle are\\
$$r(a_{2\xi+1}|W)=\left\{
             \begin{array}{ll}
               (\xi,\xi+2,\xi-1,k+2-\xi), & \hbox{$2\leq \xi\leq k$;} \\
               (2k+1-\xi,2k+4-\xi,2k+1-\xi,\xi+1-k), & \hbox{$\xi = k+1$;} \\
               (2k+1-\xi,2k+4-\xi,2k+2-\xi,\xi+1-k), & \hbox{$k+2\leq \xi\leq 2k$.}
             \end{array}
           \right.
$$\\
$$r(a_{2\xi}|W)=\left\{
             \begin{array}{ll}
               (\xi+2,\xi-1,\xi+1,k+2-\xi), & \hbox{$2\leq \xi\leq k+1$;} \\
               (2k+4-\xi,2k+2-\xi,2k+5-\xi,\xi-k), & \hbox{$k+2\leq \xi\leq 2k+1$.}
             \end{array}
           \right.
$$
Representations of the middle cycle are\\
$$r(b_{2\xi+1}|W)=\left\{
             \begin{array}{ll}
               (1,2,2,k+3), & \hbox{$\xi = 0$;} \\
               (\xi+1,\xi+1,\xi,k+3-\xi), & \hbox{$1\leq \xi\leq k-2$;} \\
               (\xi+1,\xi+1\xi,,3), & \hbox{$\xi = k-1$;} \\
               (\xi+1,\xi+1,\xi,1), & \hbox{$\xi = k$;} \\
               (\xi,\xi+1,\xi,1), & \hbox{$\xi = k+1$;} \\
               (\xi-2,\xi-1,\xi-1,3), & \hbox{$\xi = k+2$;} \\
               (2k+2-\xi,2k+3-\xi,2k+3-\xi,\xi+2-k), & \hbox{$k+3\leq \xi\leq 2k$.}
             \end{array}
           \right.
$$\\
$$r(b_{2\xi}|W)=\left\{
             \begin{array}{ll}
               (2,1,2,k+2), & \hbox{$\xi = 1$;} \\
               (\xi+1,\xi,\xi,k+3-\xi), & \hbox{$2\leq \xi\leq k-1$;} \\
               (\xi+1,\xi,\xi,2), & \hbox{$\xi = k$;} \\
               (\xi-1,\xi-1,\xi,2), & \hbox{$\xi = k+2$;} \\
               (2k+3-\xi,2k+3-\xi,2k+4-\xi,\xi+1-k), & \hbox{$k+3\leq \xi\leq 2k+1$.}
             \end{array}
           \right.
$$
Representations of the outer  cycle are\\
$$r(c_{2\xi+1}|W)=\left\{
             \begin{array}{ll}
               (2,3,3,k+4), & \hbox{$\xi = 0$;} \\
               (\xi+2,\xi+2,\xi+1,k+4-\xi), & \hbox{$1\leq \xi\leq k-2$;} \\
               (\xi+2,\xi+2,\xi+1,4), & \hbox{$\xi = k-1$;} \\
               (\xi+2,\xi+2,\xi+1,2), & \hbox{$\xi = k$;} \\
               (\xi+1,\xi+2,\xi+1,2), & \hbox{$\xi = k+1$;} \\
               (\xi-1,\xi,\xi,4), & \hbox{$\xi = k+2$;} \\
               (2k+3-\xi,2k+4-\xi,2k+4-\xi,\xi+3-k), & \hbox{$k+3\leq \xi\leq 2k$.}
             \end{array}
           \right.
$$\\
$$r(c_{2\xi}|W)=\left\{
             \begin{array}{ll}
               (3,2,3,k+3), & \hbox{$\xi = 1$;} \\
               (\xi+2,\xi+1,\xi+1,k+4-\xi), & \hbox{$2\leq \xi\leq k-1$;} \\
               (\xi+2,\xi+1,\xi+1,3), & \hbox{$\xi = k$;} \\
               (\xi+2,\xi+1,\xi+1,1), & \hbox{$\xi = k+1$;} \\
               (\xi,\xi,\xi+1,3), & \hbox{$\xi = k+2$;} \\
               (2k+4-\xi,2k+4-\xi,2k+5-\xi,\xi+2-k), & \hbox{$k+3\leq \xi\leq 2k+1$.}
             \end{array}
           \right.
$$

From the above discussion it follows that  $dim((G) = 4$ in this case.\\
%%%%%%%%%%%%%%%%%%%%%%%%%%%%%
\textbf{Case 4}: $n\equiv3\textbf{(mod 4)}$\\
For $n = 4k+3$ with $k\geq1$ and $k\in\mathbb{Z}^+$. When $n=7$ then $W=\{a_{1}, a_{2}, b_{4}\}$ is the resolving set for $V(G)$. In $V(G)$ the representations of the vertices are\\
$r(a_{4}|W)=(2,1,3,1)$, $r(a_{5}|W)=(2,2,1,2)$, $r(a_{6}|W)=(1,2,2,2)$, $r(a_{7}|W)=(3,1,2,3)$,
$r(b_{1}|W)=(1,2,2,3)$, $r(b_{2}|W)=(2,1,2,2)$, $r(b_{3}|W)=(2,2,1,1)$,$r(b_{5}|W)=(3,3,2,1)$, $r(b_{6}|W)=(2,3,3,2)$, $r(b_{7}|W)=(2,2,3,3)$, $r(c_{1}|W)=(2,3,3,4)$, $r(c_{2}|W)=(3,2,3,3)$, $r(c_{3}|W)=(3,3,2,2)$, $r(c_{4}|W)=(4,3,3,1)$, $r(c_{5}|W)=(4,4,3,2)$, $r(c_{6}|W)=(3,4,4,3)$ and $r(c_{7}|W)=(3,3,4,4)$.\\
When $n=11$ then $W=\{a_{1}, a_{2}, b_{6}\} $ is the resolving set for $V(G)$. In $V(G)$ the representations of the vertices are\\
$r(a_{4}|W)=(4,1,3,2)$, $r(a_{5}|W)=(2,4,1,2)$, $r(a_{6}|W)=(3,2,4,1)$, $r(a_{7}|W)=(3,3,2,2)$, $r(a_{8}|W)=(2,3,3,2)$, $r(a_{9}|W)=(4,2,3,3)$, $r(a_{10}|W)=(1,4,2,3)$, $r(a_{11}|W)=(3,1,4,4)$, $r(b_{1}|W)=(1,2,2,5)$, $r(b_{2}|W)=(2,1,2,4)$, $r(b_{3}|W)=(2,2,1,3)$, $r(b_{4}|W)=(3,2,2,2)$, $r(b_{5}|W)=(3,3,2,1)$, $r(b_{7}|W)=(4,4,3,1)$, $r(b_{8}|W)=(3,4,4,2)$, $r(b_{9}|W)=(3,3,4,3)$, $r(b_{10}|W)=(2,3,3,4)$, $r(b_{11}|W)=(2,2,3,5)$, $r(c_{1}|W)=(2,3,3,6)$, $r(c_{2}|W)=(3,2,3,5)$, $r(c_{3}|W)=(3,3,2,4)$, $r(c_{4}|W)=(4,3,3,3)$, $r(c_{5}|W)=(4,4,3,2)$, $r(c_{6}|W)=(5,4,4,1)$, $r(c_{7}|W)=(5,5,4,2)$, $r(c_{8}|W)=(4,5,5,3)$, $r(c_{9}|W)=(4,4,5,4)$, $r(c_{10}|W)=(3,4,4,5)$ and $r(c_{11}|W)=(3,3,4,6)$.\\
When $n\geq15$ then in this case, $\{a_{1}, a_{2}, b_{2k+2}\}$ resolves
$V(G)$. Indeed, $a_{1}$ and $a_{2}$ distinguish the vertices in the inner cycle, inner pendent cycle, outer pendent cycle and outer cycle. To show that $\{a_{1}, a_{2}, b_{2k+2}\}$ resolves vertices of $G$, first we give representations of the vertices in $G$ with respect to $W$.\\

Representations of the inner cycle are\\
$$r(a_{2\xi+1}|W)=\left\{
             \begin{array}{ll}
               (\xi,\xi+2,\xi-1,k+2-\xi), & \hbox{$2\leq \xi\leq k$;} \\
               (\xi,\xi,\xi-1,2), & \hbox{$\xi = k+1$;} \\
               (2k+4-\xi,2k+2-\xi,2k+4-\xi,\xi+1-k), & \hbox{$\xi = k+2$;} \\
               (2k+4-\xi,2k+2-\xi,\xi+1-k), & \hbox{$k+3\leq \xi\leq 2k$.}
             \end{array}
           \right.
$$\\
$$r(a_{2\xi}|W)=\left\{
             \begin{array}{ll}
               (\xi+2,\xi-1,\xi+1,k+2-\xi), & \hbox{$2\leq \xi\leq k$;} \\
               (\xi,\xi-1,\xi,1), & \hbox{$\xi = k+1$;} \\
               (\xi-2,\xi-1,\xi-1,2), & \hbox{$\xi = k+2$;} \\
               (2k+2-\xi,2k+5-\xi,2k+3-\xi,\xi-k), & \hbox{$k+3\leq \xi\leq 2k+1$.}
             \end{array}
           \right.
$$
Representations of the outer pendent cycle are\\
$$r(b_{2\xi+1}|W)=\left\{
             \begin{array}{ll}
               (1,2,2,k+3), & \hbox{$\xi = 0$;} \\
               (\xi+1,\xi+1,\xi,k+3-\xi), & \hbox{$1\leq \xi\leq k-1$;} \\
               (\xi+1,\xi+1,\xi,1), & \hbox{$k\leq \xi\leq k+1$;} \\
               (\xi-1,\xi-1,\xi,3), & \hbox{$\xi = k+2$;} \\
               (2k+3-\xi,2k+3-\xi,2k+4-\xi,\xi+2-k), & \hbox{$k+3\leq \xi\leq 2k+1$.}
             \end{array}
           \right.
$$\\
$$r(b_{2\xi}|W)=\left\{
             \begin{array}{ll}
               (2,1,2,k+2), & \hbox{$\xi = 1$;} \\
               (\xi+1,\xi,\xi,k+3-\xi), & \hbox{$2\leq \xi\leq k-1$;} \\
               (\xi+1,\xi,\xi,2), & \hbox{$\xi = k$;} \\
               (\xi-1,\xi,\xi,2), & \hbox{$\xi = k+2$;} \\
               (2k+3-\xi,2k+4-\xi,2k+4-\xi,\xi+1-k), & \hbox{$k+3\leq \xi\leq 2k+1$.}
             \end{array}
           \right.
$$
Representations of the outer  cycle are\\
$$r(c_{2\xi+1}|W)=\left\{
             \begin{array}{ll}
               (2,3,3,k+4), & \hbox{$\xi = 0$;} \\
               (\xi+2,\xi+2,\xi+1,k+4-\xi), & \hbox{$1\leq \xi\leq k-1$;} \\
               (\xi+2,\xi+2,\xi+1,2), & \hbox{$k\leq \xi\leq k+1$;} \\
               (\xi,\xi,\xi+1,4), & \hbox{$\xi = k+2$;} \\
               (2k+4-\xi,2k+4-\xi,2k+5-\xi,\xi+3-k), & \hbox{$k+3\leq \xi\leq 2k+1$.}
             \end{array}
           \right.
$$\\
$$r(c_{2\xi}|W)=\left\{
             \begin{array}{ll}
               (3,2,3,k+3), & \hbox{$\xi = 1$;} \\
               (\xi+2,\xi+1,\xi+1,k+4-\xi), & \hbox{$2\leq \xi\leq k-1$;} \\
               (\xi+2,\xi+1,\xi+1,3), & \hbox{$\xi = k$;} \\
               (\xi+2,\xi+1,\xi+1,1), & \hbox{$\xi = k+1$;} \\
               (\xi,\xi+1,\xi+1,3), & \hbox{$\xi = k+2$;} \\
               (2k+4-\xi,2k+5-\xi,2k+5-\xi,\xi+2-k), & \hbox{$k+3\leq \xi\leq 2k+1$.}
             \end{array}
           \right.
$$
From the above discussion it follows that  $dim((G) = 4$ in this case.\\

\end{proof}

\section{Relation between $\beta(G)$ and $\acute{\beta}(G)$}
In this section we prove that fault-tolerant metric dimension is bounded by a function of
metric dimension (independent of the graph). As usual, $N(v)$ and $N[v]$ denote vertex $v’s$
open and closed neighborhoods, respectively.
\begin{lemma}
Let $W$ be a resolving set of $G$. For each vertex $w\in W$, let $T(w)= \{x \in V(G):N(w)\subseteq N(x)\}$. Then $\acute{W}= \bigcup_{w \in W}(N[w] \bigcup T(w))$ is a fault-tolerant resolving set of $G$.
\end{lemma}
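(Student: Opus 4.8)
The plan is to separate the two things the statement asks for. First, since every $w\in W$ lies in $N[w]$, we have $W\subseteq\acute{W}$, so $\acute{W}$ already resolves $G$; the only real content is \emph{fault-tolerance}, namely that $\acute{W}\setminus\{u\}$ resolves $V(G)$ for every single $u\in\acute{W}$. I would fix such a $u$ and an arbitrary pair of distinct vertices $a,b$, and exhibit a member of $\acute{W}\setminus\{u\}$ that separates them. Two reductions come for free. If $u\notin W$, then $W\subseteq\acute{W}\setminus\{u\}$ and we are done at once. Otherwise write $u=w_0\in W$; if some $w\in W\setminus\{w_0\}$ satisfies $d(a,w)\neq d(b,w)$ we are again done, so the only case left is that $w_0$ is the \emph{unique} member of $W$ separating $a$ and $b$. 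In particular $d(a,w_0)\neq d(b,w_0)$, and after relabelling I set $p=d(a,w_0)<q=d(b,w_0)$.

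The main case is $p\geq 1$, i.e.\ $a\neq w_0$. Here I would take the vertex $x$ adjacent to $w_0$ on a shortest $a$--$w_0$ path, so $d(a,x)=p-1$. The triangle inequality gives $d(b,x)\geq d(b,w_0)-d(w_0,x)=q-1\geq p$, hence $d(b,x)\geq p>p-1=d(a,x)$ and $x$ separates $a$ and $b$. Since $x\in N(w_0)\subseteq N[w_0]\subseteq\acute{W}$ and $x\neq w_0$, it lies in $\acute{W}\setminus\{w_0\}$, which is exactly what is needed.

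The delicate remaining case is $p=0$, that is $a=w_0$, $b\neq w_0$, $q\geq 1$, and this is precisely where the auxiliary set $T(w)$ earns its place; I expect it to be the one genuinely subtle point. Every $x\in N(w_0)$ has $d(w_0,x)=1$, so if some neighbour $x$ of $w_0$ satisfies $d(b,x)\neq 1$, then $x$ separates $w_0$ and $b$ and lies in $\acute{W}\setminus\{w_0\}$, finishing this subcase. Otherwise $d(b,x)=1$ for every $x\in N(w_0)$, which says exactly that $N(w_0)\subseteq N(b)$, i.e.\ $b\in T(w_0)\subseteq\acute{W}$. Because $b\neq w_0$ we then have $b\in\acute{W}\setminus\{w_0\}$, and $b$ itself separates the pair since $d(b,b)=0\neq q=d(b,w_0)$. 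Geometrically this is the ``false twin'' configuration, where $b$ shares all of $w_0$'s neighbours without being adjacent to it; no neighbour of $w_0$ can help, and $T(w_0)$ is the mechanism that inserts $b$ as a backup landmark.

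Collecting the cases, in every situation $\acute{W}\setminus\{u\}$ separates the arbitrary pair $a,b$, so it is a resolving set; as $u\in\acute{W}$ was arbitrary, $\acute{W}$ is fault-tolerant. The only step requiring care is the $a=w_0$ subcase above, and the whole role of $T(w)$ is to cover exactly that obstruction, so I would present the neighbour-on-a-geodesic argument first and treat the $T(w)$ argument as the decisive final ingredient.
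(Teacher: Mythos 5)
Your proposal is correct and follows essentially the same route as the paper's proof: reduce to the case where the deleted vertex $w_0\in W$ is the unique member of $W$ separating the pair, use a neighbour of $w_0$ on a geodesic to $a$ when $a\neq w_0$, and invoke $T(w_0)$ (the false-twin configuration) when $a=w_0$. Your write-up is in fact the cleaner of the two, since the paper's version of the same argument contains several garbled inequalities, but the underlying ideas coincide.
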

\begin{proof}
Consider a vertex $w \in \acute{W}$.
If $w \notin {W}$ then $\acute{W} \setminus \{w\}$ resolves $G$ since $W \subseteq \acute{W}\setminus \{w\}$.
Now assume that $w \in W$.\\
Let $m$ and $n$ be distinct vertices of $G$. We must show that  $m$ and $n$ are resolved by some
vertex in $\acute{W}\setminus \{w\}$. If not, then $w$ must resolve $m$ and $n$  since $W$ resolves $m$ and $n$ . Without
loss of generality $d(w,m) = d(w,n)-1$.\\
First suppose that $m\neq w$. Let $x$ be the neighbour of $w$ on a shortest path between $w$
and $m$. Then $x \in \acute{W}\setminus \{w\}$ and $d(w,m) = d(x,m)+1$. Thus $d(x,m)+1\leq d(w,n)-1$. Now
$d(x,n)+1\leq d(x,n)$. Hence $d(x,m)+1\leq d(w,n)$. Thus $x$ resolves $m$ and $n$.\\
Now assume that $m=w$. If $n \notin \acute{W}$ then  $n \in \acute{W}\setminus \{w\}$ and $n$ resolves $m$ and $n$. Otherwise
$d(w,n)\geq 2$ and $n$ is not adjacent to some neighbour $x$ of $m$. Then $d(w,x) = 1$ and
$d(n,x)\geq 2$. Thus $x \in \acute{W}\setminus \{w\}$ resolves $w (= m)$ and $n$.
Hence $\acute{W}$ is a fault-tolerant resolving set of $G$.
\end{proof}
\begin{lemma}
Let $W$ be a resolving set in a graph $G$. Then for each vertex $w \in \acute{W}$, the
number of vertices of $G$ at distance at most $k$ from $w$ is at most $1 + k(2k + 1)^{|W|-1}$.
\end{lemma}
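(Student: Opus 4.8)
The plan is to bound the ball $B_k(w)=\{v\in V(G):d(v,w)\le k\}$ by counting the distinct distance-vectors its members can realise. Since $W$ is a resolving set, the representation map $v\mapsto r(v\mid W)$ is injective on $V(G)$; hence $|B_k(w)|$ is at most the number of distinct $|W|$-tuples that occur as $r(v\mid W)$ for $v\in B_k(w)$. The whole argument therefore reduces to estimating how many integer vectors can arise, and I would set $\beta=|W|$ and write $W=\{w_1,\dots,w_\beta\}$.

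First I would record the coordinatewise range. For any landmark $w_i$ and any $v\in B_k(w)$, the triangle inequality gives $|d(v,w_i)-d(w,w_i)|\le d(v,w)\le k$, so the $i$-th coordinate of $r(v\mid W)$ lies in the interval $[\,d(w,w_i)-k,\ d(w,w_i)+k\,]$, which contains at most $2k+1$ integers. Multiplying over all $\beta$ coordinates already yields the crude bound $|B_k(w)|\le (2k+1)^{\beta}$, but this is weaker than claimed and must be sharpened in one coordinate.

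The sharpening is where I would use that $w$ is itself a landmark, say $w=w_1$. Then the first coordinate is exactly $d(v,w_1)=d(v,w)$, which for $v\in B_k(w)$ takes a value in $\{0,1,\dots,k\}$, and the value $0$ is attained by the single vertex $v=w$. I would split the count accordingly: the vertex $w$ contributes the summand $1$, while every other $v\in B_k(w)$ has first coordinate in $\{1,\dots,k\}$ (at most $k$ choices) and each of the remaining $\beta-1$ coordinates in a range of at most $2k+1$ integers. By injectivity of $r(\cdot\mid W)$ the number of such $v$ is at most $k(2k+1)^{\beta-1}$, and adding the contribution of $w$ gives the stated bound $1+k(2k+1)^{\beta-1}$.

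The main obstacle I anticipate is not the arithmetic but ensuring that $w$ may legitimately be taken as one of the coordinates $w_i$, since this is exactly the step that converts one factor $2k+1$ into the factor $k$ together with the additive $1$. For the intended use --- bounding $|N[w]\cup T(w)|$ for $w\in W$ in the construction of $\acute{W}$, where $T(w)\subseteq B_2(w)$ forces $N[w]\cup T(w)\subseteq B_2(w)$ --- the vertex $w$ indeed lies in $W$, so the reduction is available and the case $k=2$ applies directly. If instead one wanted the estimate for an arbitrary $w\notin W$, the clean separation of the vertex $v=w$ is lost and only the crude bound $(2k+1)^{\beta}$ survives; I would therefore state and apply the lemma for landmarks $w\in W$, which is all the later size estimate for $\acute{W}$ requires.
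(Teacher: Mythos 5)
Your proof is correct and follows essentially the same counting argument as the paper's: the coordinate of the distance vector at $w$ takes at most $k$ values for $x\neq w$ (plus the single vertex $w$ itself at distance $0$), and each of the remaining $|W|-1$ coordinates takes at most $2k+1$ values by the triangle inequality, whence the bound $1+k(2k+1)^{|W|-1}$. Your remark that the argument genuinely requires $w\in W$ (not merely $w\in\acute{W}$ as the statement is worded) is apt --- the paper's own proof also tacitly treats $d(x,w)$ as one of the coordinates of $r(x|W)$, and the application in the subsequent theorem only invokes the case $w\in W$ with $k=2$.
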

\begin{proof}
Let $1\leq d(w,x)\leq k$. For every vertex $u \in \acute{W}$ with $u\neq w$, we have $|d(x,u)-d(x,w)|\leq k$. Thus there are $2k + 1$ possible values for $d(x,u)$ and there are at most $k$ possible values for $d(x,w)$. Thus the vector of distances from $x$ to $W$ has $1 + k(2k + 1)^{|W|-1}$ possible values. The result follows, since the vertices at distance at most $k$ are resolved by $W$.
\end{proof}
\begin{theorem}
Fault-tolerant metric dimension is bounded by a function of the metric
dimension (independent of the graph). In particular, $\acute{\beta}(G)\leq \beta(G)(1 + 2.5^{\beta(G)-1})$ for every graph $G$.
\end{theorem}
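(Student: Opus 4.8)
The plan is to combine the two preceding lemmas: the construction lemma manufactures a fault-tolerant resolving set of a controllable shape, and the counting lemma caps how many vertices can sit near any one basis vertex. First I would fix a metric basis $W$ of $G$, so that $|W|=\beta(G)$, and feed it to the construction lemma. This hands me a fault-tolerant resolving set $\acute{W}=\bigcup_{w\in W}\bigl(N[w]\cup T(w)\bigr)$, whence $\acute{\beta}(G)\leq|\acute{W}|$ by the definition of the fault-tolerant metric dimension. The entire problem then collapses to estimating $|\acute{W}|$.

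Using subadditivity of cardinality over unions, $|\acute{W}|\leq\sum_{w\in W}|N[w]\cup T(w)|$, so it is enough to bound $|N[w]\cup T(w)|$ for a single $w$. The one genuinely content-bearing step, and the bridge between the two lemmas, is the claim that $N[w]\cup T(w)$ lies entirely within the ball of radius $2$ centred at $w$. For $N[w]$ this is immediate. For $T(w)$ I would argue as follows: since $G$ is connected with at least one edge, $w$ has a neighbour $y$; if $x\in T(w)$ then $y\in N(w)\subseteq N(x)$, so $x$ is adjacent to $y$ and therefore $d(w,x)\leq d(w,y)+d(y,x)=2$. Hence every vertex of $N[w]\cup T(w)$ is at distance at most $2$ from $w$.

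Finally I would invoke the counting lemma with $k=2$ at each basis vertex $w$ (legitimate since $W\subseteq\acute{W}$), which gives $|N[w]\cup T(w)|\leq 1+2\,(2\cdot 2+1)^{|W|-1}=1+2\cdot 5^{|W|-1}$. Summing over the $\beta(G)$ vertices of the basis yields $|\acute{W}|\leq\beta(G)\bigl(1+2\cdot 5^{\beta(G)-1}\bigr)$, and chaining this with $\acute{\beta}(G)\leq|\acute{W}|$ proves the stated bound. I expect the main obstacle to be precisely the distance-$2$ containment of $T(w)$, where the definition $T(w)=\{x:N(w)\subseteq N(x)\}$ is used in an essential way; the only edge case to dispose of first is the trivial single-vertex graph, and for every connected $G$ with at least one edge the argument runs without change.
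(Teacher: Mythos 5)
Your proof is correct and follows essentially the same route as the paper: apply Lemma 2 to a metric basis to obtain the fault-tolerant resolving set $\acute{W}=\bigcup_{w\in W}(N[w]\cup T(w))$, bound each piece via Lemma 3 with $k=2$ (reading the paper's ``$2.5^{\beta(G)-1}$'' as $2\cdot 5^{\beta(G)-1}$), and sum over the $\beta(G)$ basis vertices. In fact you supply the one step the paper's one-line proof leaves implicit, namely that $T(w)$ is contained in the ball of radius $2$ about $w$ because any $x$ with $N(w)\subseteq N(x)$ is adjacent to every neighbour of $w$; this is exactly the right justification.
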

\begin{proof}
  Let $W$ be a metric basis for a graph $G$. Lemma 3 with $k = 2$ implies that
$|N[w]\bigcup T(w)|\leq 1 + 2.5^{\beta(G)-1})$ for each vertex $w \in W$. Thus $\acute{\beta}(G)\leq \beta(G)(1 + 2.5^{\beta(G)-1})$.
\end{proof}

\section{Conclusion}
The concept of resolving set and that of metric dimension is defined in monograph. Graphs are special examples of metric spaces with their intrinsic path metric. In this paper the metric dimension of the $G$, graph has been studied and  proved that graph has a constant metric dimension 3.\\

\end{document}